\theoremstyle{plain}
\newtheorem{theorem}{Theorem}[section]
\newtheorem{corollary}[theorem]{Corollary}
\newtheorem{proposition}[theorem]{Proposition}
 \newtheorem{lemma}[theorem]{Lemma}
\theoremstyle{definition}
\newtheorem{remark}[theorem]{Remark}
 \newtheorem{definition}[theorem]{Definition}
\newtheorem*{theorem*}{Theorem}
\newtheorem*{proposition*}{Proposition}
\newtheorem*{definition*}{Definition}
\numberwithin{equation}{section}
\newenvironment{abc}{\begin{enumerate}[{\rm (a)}]}{\end{enumerate}}
\newenvironment{num}{\begin{enumerate}[{\rm 1.}]}{\end{enumerate}}
\newenvironment{iiv}{\begin{enumerate}[{\rm (i)}]}{\end{enumerate}}
\newcommand{\opnorm}{\@ifstar\@opnorms\@opnorm}
\newcommand{\@opnorms}[1]{%
  \left|\mkern-1.5mu\left|\mkern-1.5mu\left|
   #1
  \right|\mkern-1.5mu\right|\mkern-1.5mu\right|
}
\newcommand{\@opnorm}[2][]{%
  \mathopen{#1|\mkern-1.5mu#1|\mkern-1.5mu#1|}
  #2
  \mathclose{#1|\mkern-1.5mu#1|\mkern-1.5mu#1|}
}
\def\Ell{\mathrm{L}}
\newcommand\CB[1]{{\color{black} #1}}
\newcommand\RH[1]{{\color{black} #1}}
\begin{document}
\title{Extrapolation of operator-valued multiplication operators} 
\author{Christian Budde}
\address{North-West University, Department of Mathematics, Unit for BMI, Potchefstroom 2531, South Africa}
\email{christian.budde@nwu.ac.za}
\author{Retha Heymann}
\address{Stellenbosch University, Mathematics Division, Faculty of Science, Merriman Avenue, 7600 Stellenbosch, South Africa}
\email{rheymann@sun.ac.za}
\begin{abstract}
We discuss $\Ell^p$ fiber spaces which appear, e.g., as extrapolation spaces of unbounded multiplication operators which in turn are motivated, for instance, by non-autonomous evolution equations. 
\end{abstract}

\keywords{extrapolation spaces, multiplication operators, $\mathrm{L}^p$-fiber spaces, strongly continuous one-parameter operator semigroups, non-autonomous problems, evolution semigroups}
\subjclass[2010]{47D06, 37B55, 47B38, 58D25}%
\thanks{The first author was supported by the DAAD-TKA Project 308019 ``\emph{Coupled systems and innovative time integrators}''.}

\def\LLL{\mathscr{L}}
\def\RR{\mathbb{R}}
\def\CC{\mathbb{C}}
\def\NN{\mathbb{N}}
\def\QQ{\mathbb{Q}}
\def\M{\mathcal{M}}
\def\dom{\mathrm{D}}
\def\dd{\mathrm{d}}
\def\disp#1{\displaystyle{#1}}
\def\BC{\mathrm{C}_{\mathrm{b}}}
\def\BUC{\mathrm{C_{\mathrm{ub}}}}
\def\Cinf{\mathrm{C}_{\mathrm{0}}}
\def\ee{\mathrm{e}}
\def\esssup{\mathop{\mathrm{ess}\,\sup}}
\def\If{\mathrm{I}}
\def\T{\mathcal{T}}
\def\F{\mathcal{F}}
\def\B{\mathcal{B}}
\def\semis{\mathscr{P}}
\def\o{s}
\def\Id{\mathrm{I}}

\date{}
\maketitle

\section*{Introduction}
Nagel-Nickel-Romanelli studied extrapolation spaces for unbounded operators in their paper \cite{NagelIdent} in Quaest.\ Math.\ in 1996. This paper extends this work by considering another scenario where extrapolation spaces occur. 


It is motivated by non-autonomous Cauchy problems. 
Such a problem can be treated as an abstract Cauchy problem on a Banach space $X$. It takes the form
\begin{align}\label{eqn:nACP}\tag{nACP}
\begin{cases}
\dot{u}(t)=A(t)u(t),&\quad t,s\in\RR,\ t\geq s,\\
u(s)=x,
\end{cases}
\end{align}
where $(A(t),\dom(A(t)))_{t\in\RR}$ is a family of linear operators on $X$ (see \cite[Chapter VI, Section 9]{EngelNagel}). The solution of such a problem, if it exists, is given by a so-called evolution family $(U(t,s))_{t\geq s}$, see \cite[Chapter VI, Def.~9.2]{EN}. 
One of the problems of evolution families is that, in contrast to $C_0$-semigroups, they do not yield a differentiable solution, e.g., it is possible that the map $t\mapsto U(t,s)x$ is differentiable only for $x=0$. 
	However, if we have a solution by means of an evolution family, we obtain a differentiable structure by means of a strongly continuous semigroup $(T(t))_{t\geq0}$ on the Bochner space $\Ell^p(\RR,X)$ ($1\leq p<\infty$) by
\[
(T(t)f)(s):=U(s,s-t)f(s-t),\quad t\geq0,\ f\in\mathrm{L}^p(\RR,X),\ s\in\RR
\]
(see \cite[Chapter 9, Part b., especially Remark 9.12)]{EngelNagel}).
One important challenge is to determine the exact domain of the corresponding generator $(G,\dom(G))$, \CB{since it determines the space of differentiability for the semigroup, cf. \cite[Chapter II, Lemma~1.3]{EN}}. By \cite[Thm.~3.4.7]{ThomPhD} operator-valued multiplication operators now arise naturally. Indeed, the evolution family provides a solution to \eqref{eqn:nACP} if and only if there exists an invariant core $D\subseteq\mathrm{W}^{1,p}(\RR,X)\cap\dom(\mathcal{A})$ with 
\[Gf=\mathcal{A}f-f'\]
for $f\in D$. Here $\mathcal{A}$ is the multiplication operator on $\Ell^p(\RR,X)$ defined by
$$
  \left( \mathcal{A}f
  \right)(\cdot) = A(\cdot)f(\cdot) \ \text{ for } \ f\in \Ell^p(\RR,X) \text{ with } A(\cdot)f(\cdot)\in\Ell^p(\RR,X),
$$ 
where $A(t), t\in\RR,$ is the family of linear operators in \eqref{eqn:nACP}.
One answer, in the special case where $A(t)\equiv A$ for some semigroup generator $(A,\dom(A))$, is due to Nagel, Nickel and Romanelli \cite[Sect.~4]{NagelIdent}. In \cite{Graser1997} T.~Graser studied bounded and unbounded operator-valued multiplication operators on the space of continuous functions $\mathrm{C}_0(\RR,X)$ as well as their extrapolation spaces. We will see that extrapolation spaces of multiplication operators on $\Ell^p(\RR,X)$ can be constructed similarly. Later on, S.~Thomaschewski studied properties of such multiplication operators on Bochner $\Ell^p$-spaces \cite[Sect.~2.2 \&~2.3]{ThomPhD} in connection with non-autonomous problems. \RH{In particular}, she connects multiplication semigroups with unbounded operator-valued multiplication operators. 
In order to construct our extrapolation spaces, the notion of fiber integrable functions is essential and leads to so-called $L^p$ fibre spaces, see \cite{HeymannPHD}.

We start this paper with some preliminaries on fiber integrable functions and continue with unbounded multiplication operators in the second section. In Section \ref{sec:ExtraFiberMultSemi} we discuss multiplication semigroups whose generators are multiplication operators. Furthermore, we determine the extrapolation spaces of such multiplication operators by means of $\Ell^p$-fiber spaces. We remark that the results of this paper appear in similar fashion in \cite[Chapter 3]{BuddePhD}.

\section{$\mathrm{L}^p$-fiber spaces}

Firstly, we recall the notion of a measurable Banach fiber bundle as it was introduced by R.~Heymann, cf. \cite[Def.~VI.1.i]{HeymannPHD}. To do so, let $(\Omega,\Sigma,\mu)$ be a $\sigma$-finite measure space. Furthermore, let $V$ be a complex vector space together with a family of seminorms $\left\{\opnorm{\cdot}_s:\ s\in\Omega\right\}$ on $V$. Assume that there exists a countable set of elements $\B:=\left\{b_k:\ k\in\NN\right\}\subseteq V$ such that $\B$ is a vector space over $\QQ+i\QQ$ and such that for each $k\in\NN$ the map $s\mapsto\opnorm{b_k}_s$ is measurable as a map from $\Omega$ to $\RR$. For every $s\in\Omega$ we define the set $N_s:=\left\{b_k\in\B:\ \opnorm{b_k}_s=0\right\}$ and take the completion of the quotient space $\B/N_s$ with respect to the induced norms $\left\|\cdot\right\|_s$ on $\B/N_s$. This Banach space is denoted by $X_s$.

\begin{definition}\label{def:mBfs}
The family of Banach spaces $(X_s,\left\|\cdot\right\|_s)_{s\in\Omega}$ is called a \emph{measurable Banach fiber bundle}.
\end{definition}

Next, we follow \cite[Def.~VI.1.iii]{HeymannPHD} in order to define what it means for a function $f:\Omega\to\bigcup_{s\in\Omega}{X_s}$ to be measurable.

\begin{definition}\label{def:fibrem}
Let $(X_s,\left\|\cdot\right\|_s)_{s\in\Omega}$ be a measurable Banach fiber bundle. We define a function $f:\Omega\rightarrow\bigcup_{s\in\Omega}{X_s}$ with $f(s)\in X_s$ for $\mu$-almost every $s\in\Omega$ to be \emph{fiber measurable} if it is almost everywhere a pointwise limit with respect to $\left\|\cdot\right\|_s$ of measurable simple functions with values in $\B$. More precisely, this means that there exists a sequence $(f_j)_{j\in\NN}$ of functions $f_j:\Omega\rightarrow\bigcup_{s\in\Omega}{X_s}$ such that
\begin{num}
	\item $\displaystyle{f_j=\sum_{i=1}^{n_j}{\left(b_{\pi(i)}+N_s\right)\textbf{1}_{\Omega_i}}}$, where $\pi:\NN\rightarrow\NN$, $n_j\in\NN$, $\Omega_i\in\Sigma$, $\Omega_i\cap\Omega_j=\varnothing$, $i\neq j$, and $b_i\in\B$ for each $1\leq i\leq n_j$,
	\item $\displaystyle{f(s)=\lim_{j\to\infty}{f_j(s)}}$ with respect to $\left\|\cdot\right\|_s$ for $\mu$-almost every $s\in\Omega$. 
\end{num}
The set of fiber measurable functions on $\Omega$ together with the pointwise addition and scalar multiplication is a $\mathbb{C}$-vector space, which we will call a \emph{measurable Banach fiber bundle}.
\end{definition}

Having the concept of measurability we continue with the notion of integrability for functions from $\Omega$ to measurable Banach fiber bundles, see \cite[Def.~VI.1.vi]{HeymannPHD}. Especially, we define what it means to be $p$-integrable for $1\leq p<\infty$. To do so, we remark that, for a fiber-measurable function $f$, the map $s\mapsto\left\|f(s)\right\|_s^p$ is measurable, since, by construction, the map $s\mapsto\left\|f(s)\right\|_s$ is a pointwise limit of measurable functions.

\begin{definition}\label{def:FibInt}
Let $1\leq p<\infty$. We call a fiber measurable function $f:\Omega\rightarrow\bigcup_{s\in\Omega}{X_s}$ \emph{fiber $p$-integrable} if the integral
\[
\int_{\Omega}{\left\|f(s)\right\|_s^p}\ \dd\mu(s),
\]
exists and is finite. In this case, we call
\[
\left\|f\right\|_p:=\left(\int_{\Omega}{\left\|f(s)\right\|_s^p}\ \dd\mu(s)\right)^{\frac{1}{p}}
\]
the \emph{$\Ell^p$-fiber norm} of $f$.
\end{definition}
 
\begin{remark}
\begin{iiv}
	\item Observe that the set of fiber $p$-integrable functions with pointwise addition and scalar multiplication is a vector space.
	\item The relation defined by $f\sim g$ $\Leftrightarrow$ $f=g$ $\mu$-almost everywhere is an equivalence relation on the set of fiber $p$-integrable functions.
	\item The set of equivalence classes of fiber $p$-integrable functions with the canonical vector space structure is called a \emph{$\mathrm{L}^p$-fiber space} and is denoted by $\Ell^p(\Omega,(X_s)_{s\in\Omega})$.
	\item By \cite[Prop.~VI.1.xi]{HeymannPHD} the space $\Ell^p(\Omega,(X_s)_{s\in\Omega})$ is a Banach space with respect to the $\Ell^p$-fiber norm.
	\item \CB{The notion of $\Ell^p$-fiber spaces is closely related to the one of measurable Banach bundles, cf. \cite{Gut1993} and \cite{GM2013}.}
\end{iiv}
\end{remark}

\section{Operator-valued multiplication operators}\label{sec:UnbMult}

The main objects of this section are 
operator-valued multiplication operators, cf. \cite[Def.~2.3.1]{ThomPhD}.

\begin{definition}\label{def:UnOpVMult}
Let $X$ be a Banach space and let $(M(s),\dom(M(s)))_{s\in\Omega}$ be a family of (possibly) unbounded linear operators on $X$, i.e., $M(s):\dom(M(s))\subseteq X\rightarrow X$ for $s\in\Omega$. The operator $(\M,\dom(\M))$ on $\Ell^p(\Omega,X)$ defined by
\begin{align*}
\dom(\M)&:=\left\{f\in\Ell^p(\Omega,X):\ f(s)\in\dom(M(s))\ \mu\text{-a.e.}, (s\mapsto M(s)f(s))\in\Ell^p(\Omega,X)\right\},\\
(\M f)(s)&:=M(s)f(s),\quad f\in\dom(\M), s\in\Omega,\ \mu\text{-almost everywhere},
\end{align*}
is called the corresponding \emph{
	operator-valued multiplication operator}. The operators $(M(s),\dom(M(s)))$, $s\in\Omega$, are called \emph{fiber operators}.
\end{definition}

As already mentioned in the beginning, the concept of unbounded multiplication operators was studied by S.~Thomaschewski \cite{ThomPhD} on Bochner $\Ell^p$-spaces in connection with non-autonomous Cauchy problems. Here we summarize some important results.

Firstly, the closedness of the fiber operators implies the closedness of the multiplication operator, see \cite[Lemma~2.3.4]{ThomPhD}.

\begin{lemma}
If $(M(s),\dom(M(s)))$ is closed for $\mu$-almost every $s\in\Omega$, then $(\M,\dom(\M))$ is closed.
\end{lemma}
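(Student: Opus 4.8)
The plan is to verify closedness straight from the sequential definition: I would take a sequence $(f_n)_{n\in\NN}$ in $\dom(\M)$ with $f_n\to f$ in $\Ell^p(\Omega,X)$ and $\M f_n\to g$ in $\Ell^p(\Omega,X)$, and show that $f\in\dom(\M)$ with $\M f=g$. This is exactly the statement that the graph of $\M$ is closed in $\Ell^p(\Omega,X)\times\Ell^p(\Omega,X)$.

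The first step is to pass from norm convergence to pointwise convergence. Since convergence in the Bochner space $\Ell^p(\Omega,X)$ yields a subsequence converging $\mu$-almost everywhere, I would first extract a subsequence along which $f_n(s)\to f(s)$ in $X$ for a.e.\ $s$, and then extract a further subsequence along which $(\M f_n)(s)=M(s)f_n(s)\to g(s)$ in $X$ for a.e.\ $s$ as well. After relabelling I obtain a single subsequence for which both pointwise limits hold simultaneously off a common $\mu$-null set.

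The heart of the argument is then a pointwise use of the closedness hypothesis. For a.e.\ fixed $s$ I have $f_n(s)\in\dom(M(s))$ (because each $f_n\in\dom(\M)$), together with $f_n(s)\to f(s)$ and $M(s)f_n(s)\to g(s)$; since $M(s)$ is closed for a.e.\ $s$, this forces $f(s)\in\dom(M(s))$ and $M(s)f(s)=g(s)$. As $g\in\Ell^p(\Omega,X)$, the map $s\mapsto M(s)f(s)=g(s)$ lies in $\Ell^p(\Omega,X)$, so $f$ satisfies both defining conditions of $\dom(\M)$ and $\M f=g$, which is what we want.

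The only genuine subtlety is the bookkeeping of null sets: the set where $M(s)$ fails to be closed, the sets where $f_n(s)\notin\dom(M(s))$ for each $n$, and the exceptional sets produced by the two subsequence extractions are all $\mu$-null, and there are only countably many of them, so their union is again $\mu$-null and the pointwise argument goes through off this union. I expect the extraction of a common almost-everywhere convergent subsequence for both $(f_n)$ and $(\M f_n)$ to be the step requiring the most care, although it is a standard feature of Bochner $\Ell^p$-spaces.
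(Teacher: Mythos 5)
Your proof is correct: the paper itself gives no argument for this lemma, citing it directly from Thomaschewski's thesis (Lemma~2.3.4 in \cite{ThomPhD}), and your strategy --- extracting a common subsequence along which both $f_n(s)\to f(s)$ and $M(s)f_n(s)\to g(s)$ $\mu$-almost everywhere, then applying closedness of $M(s)$ fiberwise --- is precisely the standard argument used there. Your attention to the countable union of null sets (the closedness exceptional set, the sets where $f_n(s)\notin\dom(M(s))$, and the subsequence extraction sets) is exactly the right bookkeeping.
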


In what follows we assume that $(\M,\dom(\M))$ is a closed operator-valued multiplication operator with closed fiber operators $(M(s),\dom(M(s)))_{s\in\Omega}$. The following result \cite[Lemma~2.3.5]{ThomPhD} shows that the resolvent operator of $(\M,\dom(\M))$ \RH{also gives} rise to a multiplication operator. For this we remind the reader of the following definition \cite[Def.~2.2.3]{ThomPhD}:
\[
\mathrm{L}^{\infty}\left(\Omega,\LLL_\mathrm{s}(X)\right):=\left\{M:\Omega\to\LLL(X):\ s\mapsto M(s)x\in\Ell^{\infty}(\Omega,X)\ \text{for all}\ x\in X\right\}.
\]

\begin{lemma}\label{lem:ResMultOp}
Let $(\M,\dom(\M))$ be a multiplication operator and assume that $\lambda\in\rho(\M)$ \RH{where $\rho(\M)$ denotes the resolvent set of $\M$}. Then $R(\lambda,\M)$ is a bounded multiplication operator, i.e., there exists $M\in\mathrm{L}^{\infty}\left(\Omega,\LLL_\mathrm{s}(X)\right)$ such that $(R(\lambda,\M)f)(s)=M(s)f(s)$ for all $f\in\Ell^p(\Omega,X)$.
\end{lemma}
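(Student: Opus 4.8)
The plan is to characterise $R(\lambda,\M)$ through its commutation with scalar multiplication operators, and then to show that any bounded operator with this commutation property is automatically an operator-valued multiplication operator.

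First I would record that $\M$ commutes with multiplication by scalar functions. For $\varphi\in\mathrm{L}^\infty(\Omega)$ let $M_\varphi$ be the bounded operator $(M_\varphi f)(s):=\varphi(s)f(s)$ on $\Ell^p(\Omega,X)$. If $f\in\dom(\M)$, then $\varphi(s)f(s)\in\dom(M(s))$ $\mu$-a.e.\ (each fiber domain is a linear subspace), and $M(s)(\varphi(s)f(s))=\varphi(s)M(s)f(s)$ lies in $\Ell^p(\Omega,X)$ because $\varphi$ is essentially bounded and $s\mapsto M(s)f(s)\in\Ell^p(\Omega,X)$. Hence $M_\varphi$ maps $\dom(\M)$ into itself and $\M M_\varphi f=M_\varphi\M f$ there, i.e.\ $(\lambda-\M)M_\varphi f=M_\varphi(\lambda-\M)f$. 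Writing an arbitrary $g\in\Ell^p(\Omega,X)$ as $g=(\lambda-\M)f$ with $f=R(\lambda,\M)g\in\dom(\M)$, this identity reads $(\lambda-\M)M_\varphi R(\lambda,\M)g=M_\varphi g$; applying $R(\lambda,\M)$ (and using that $M_\varphi R(\lambda,\M)g\in\dom(\M)$) yields $R(\lambda,\M)M_\varphi=M_\varphi R(\lambda,\M)$ for every $\varphi\in\mathrm{L}^\infty(\Omega)$.

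Second I would prove that any bounded $T$ on $\Ell^p(\Omega,X)$ commuting with all $M_\varphi$ is a bounded multiplication operator, and apply this to $T=R(\lambda,\M)$. Choosing $\varphi=\mathbf{1}_B$ gives the consistency relation $T(x\mathbf{1}_{A\cap B})=\mathbf{1}_B\,T(x\mathbf{1}_A)$ for $x\in X$ and measurable $A,B$. Writing $\Omega=\bigcup_{n}\Omega_n$ with $\mu(\Omega_n)<\infty$ increasing, this lets me define $h_x(s):=(T(x\mathbf{1}_{\Omega_n}))(s)$ for $s\in\Omega_n$, well defined $\mu$-a.e.\ independently of $n$, and set $M(s)x:=h_x(s)$; the map $s\mapsto M(s)x$ is then measurable, and linearity of $M(s)$ in $x$ is inherited from $T$. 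For the norm bound I use that $T(x\mathbf{1}_A)=\mathbf{1}_A\,h_x$ for $A\subseteq\Omega_n$, so that $\int_A\|h_x(s)\|^p\,\dd\mu(s)=\|T(x\mathbf{1}_A)\|_p^p\le\|T\|^p\|x\|^p\mu(A)$ for every measurable $A$, which forces $\|M(s)x\|\le\|T\|\,\|x\|$ a.e. On simple functions $f=\sum_i x_i\mathbf{1}_{A_i}$ the construction gives $(Tf)(s)=\sum_i\mathbf{1}_{A_i}(s)\,M(s)x_i=M(s)f(s)$, and this extends to all of $\Ell^p(\Omega,X)$ by density of simple functions and continuity of $T$. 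Thus $M\in\mathrm{L}^\infty(\Omega,\LLL_{\mathrm{s}}(X))$ with $(Tf)(s)=M(s)f(s)$, as required.

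The main obstacle is the representation step, and within it the passage from the integral estimate to the pointwise bound together with the handling of exceptional null sets uniformly in $x$: the inequality $\|M(s)x\|\le\|T\|\,\|x\|$ holds for each fixed $x$ only off an $x$-dependent null set, so to conclude that $M(s)$ is a genuine element of $\LLL(X)$ for $\mu$-almost every $s$ I would pass to a countable dense subset of $X$ (using that $X$-valued $\Ell^p$-functions are essentially separably valued) to obtain a single null set, and then extend $M(s)$ by continuity. The remaining points -- measurability of $s\mapsto h_x(s)$ and the well-definedness across the exhaustion $(\Omega_n)_n$ -- are routine once the consistency relation is in hand.
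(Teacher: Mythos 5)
Your proof follows essentially the same route as the paper's, which does not prove the lemma itself but cites \cite[Lemma~2.3.5]{ThomPhD} and remarks explicitly that the proof rests on the characterization of bounded multiplication operators (\cite[Thm.~2.2.17]{ThomPhD}) as those bounded operators commuting with all scalar multiplications $M_\varphi$, $\varphi\in\mathrm{L}^\infty(\Omega)$ --- your first step is exactly that commutation argument for $R(\lambda,\mathcal{M})$, and your second step reproves that characterization. One caveat: in the representation step, the parenthetical appeal to $X$-valued $\mathrm{L}^p$-functions being essentially separably valued does not by itself furnish a countable dense subset of $X$, so your patch for the $x$-dependent null sets genuinely requires $X$ to be separable (an assumption the paper imposes from Section~3 onwards, and which the cited characterization likewise needs); for nonseparable $X$ the passage from the a.e.\ bounds to a single family $M(s)\in\mathscr{L}(X)$ would require lifting-type arguments.
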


Unfortunately, since the proof of the above lemma uses a characterization of bounded multiplication operators, the converse cannot be proved in a similar way and has not been proved in general, cf. \cite[Thm.~2.2.17]{ThomPhD}.
However, the following result \cite[Thm.~2.3.6]{ThomPhD} holds.

\begin{theorem}
Let $(\M,\dom(\M))$ be a densely defined closed von $\Ell^p(\Omega,X)$. Assume that there exists an unbounded sequence $(\lambda_n)_{n\in\NN}$ in $\rho(\M)$ such that for all $f\in\Ell^p(\Omega,X)$ one has\\ $\lim_{n\to\infty}{\lambda_nR(\lambda_n,\M)f}=f$. If $R(\lambda_n,\M)$ is a bounded multiplication operator for every $n\in\NN$, then there exists a family $(M(s),\dom(M(s)))_{s\in\Omega}$ of densely defined closed operators on $X$ such that $(\M,\dom(\M))$ is a multiplication operator with fiber operators $(M(s),\dom(M(s)))_{s\in\Omega}$. Furthere there exists a $\mu$-null-set $\mathscr{N}$ such that for every $s\in\Omega\setminus\mathscr{N}$ and for each $n\in\NN$ one has $\lambda_n\in\rho(M(s))$.
\end{theorem}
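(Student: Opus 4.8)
The plan is to reconstruct the fiber operators $M(s)$ from the symbols of the resolvents $R(\lambda_n,\M)$ and then to verify that $\M$ coincides with the associated multiplication operator. By hypothesis, for each $n\in\NN$ there is $M_n\in\Ell^{\infty}(\Omega,\LLL_{\mathrm s}(X))$ with $(R(\lambda_n,\M)f)(s)=M_n(s)f(s)$ for all $f\in\Ell^p(\Omega,X)$ and $\mu$-a.e.\ $s$. My first step is to show that, for $\mu$-a.e.\ $s$, the bounded operators $(M_n(s))_{n\in\NN}$ form a pseudo-resolvent on $X$, i.e.\ $M_n(s)-M_m(s)=(\lambda_m-\lambda_n)M_n(s)M_m(s)$. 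Since the composition of two bounded multiplication operators is again a bounded multiplication operator whose symbol is the pointwise product of the symbols, the resolvent identity for $R(\lambda_n,\M)$ and $R(\lambda_m,\M)$ translates into the statement that the two symbols $s\mapsto M_n(s)-M_m(s)$ and $s\mapsto(\lambda_m-\lambda_n)M_n(s)M_m(s)$ define one and the same bounded multiplication operator on $\Ell^p(\Omega,X)$.

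Passing from such an equality of \emph{operators} to a single exceptional null set is the technical heart of the argument. For a fixed pair $(n,m)$ and fixed $x\in X$, testing against $f=x\mathbf{1}_{A}$ with $A$ of finite measure shows that $(M_n(s)-M_m(s))x=(\lambda_m-\lambda_n)M_n(s)M_m(s)x$ off a null set depending on $x$ and $(n,m)$. Fixing a countable dense set $D\subseteq X$ (here one uses separability of $X$, respectively of the separable subspaces carrying the relevant Bochner functions) and taking the union of the countably many exceptional null sets over $D$ and over all pairs $(n,m)$, one obtains a single null set outside of which the identity holds for every $x\in D$, hence, by continuity of both sides in $x$, for all $x\in X$. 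Moreover, since $\lambda_nR(\lambda_n,\M)\to\Id$ strongly, the uniform boundedness principle gives $C:=\sup_n\|\lambda_nR(\lambda_n,\M)\|<\infty$; because the operator norm of a bounded multiplication operator is the essential supremum of the norms of its symbol (cf.\ \cite[Thm.~2.2.17]{ThomPhD}), I may enlarge the null set so that $\|\lambda_nM_n(s)\|\le C$ for all $n$ off it.

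Next I transfer the approximation hypothesis to the fibers. For $x\in D$, the convergence $\lambda_nR(\lambda_n,\M)(x\mathbf{1}_{A})\to x\mathbf{1}_{A}$ in $\Ell^p$ yields, along a subsequence, $\mu$-a.e.\ convergence $\lambda_{n_k}M_{n_k}(s)x\to x$; by a diagonal argument over the countable set $D$ one obtains a single subsequence $(\lambda_{n_k})$ and a single null set outside of which $\lambda_{n_k}M_{n_k}(s)x\to x$ for every $x\in D$. Combined with the uniform bound $\|\lambda_{n_k}M_{n_k}(s)\|\le C$ and the density of $D$, this gives $\lambda_{n_k}M_{n_k}(s)x\to x$ for all $x\in X$ and every $s$ outside a fixed null set $\mathscr N$. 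For such $s$ the pseudo-resolvent $(M_n(s))_n$ has trivial kernel and dense range, so by the standard pseudo-resolvent theorem (cf.\ \cite{EN}) it is the resolvent of a unique densely defined closed operator $(M(s),\dom(M(s)))$ on $X$ with $\lambda_n\in\rho(M(s))$ and $R(\lambda_n,M(s))=M_n(s)$ for every $n$; on $\mathscr N$ one sets $M(s):=0$.

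It remains to identify $\M$ with the multiplication operator $\tilde\M$ associated with $(M(s),\dom(M(s)))_{s\in\Omega}$. Fixing one $n$, every $f\in\dom(\M)$ is of the form $f=R(\lambda_n,\M)g$ with $g=(\lambda_n-\M)f$, so $f(s)=M_n(s)g(s)=R(\lambda_n,M(s))g(s)\in\dom(M(s))$ a.e.\ and $M(s)f(s)=\lambda_nf(s)-g(s)=(\M f)(s)$, whence $f\in\dom(\tilde\M)$ and $\M f=\tilde\M f$. Conversely, if $f\in\dom(\tilde\M)$, then $h:=(\lambda_n-\tilde\M)f\in\Ell^p(\Omega,X)$ satisfies $(R(\lambda_n,\M)h)(s)=M_n(s)h(s)=R(\lambda_n,M(s))(\lambda_n-M(s))f(s)=f(s)$, so $f=R(\lambda_n,\M)h\in\dom(\M)$ and again $\M f=\tilde\M f$; thus $\M=\tilde\M$. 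I expect the main obstacle to be precisely the uniform management of the exceptional null sets in the second and third steps, namely extracting one subsequence and one null set valid for all $x\in X$ simultaneously, which is exactly where separability of $X$ and the norm description of bounded multiplication operators enter.
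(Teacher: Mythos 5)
The paper does not actually prove this theorem: it is quoted verbatim from Thomaschewski's thesis (\cite[Thm.~2.3.6]{ThomPhD}), so there is no in-paper proof to compare against; judged on its own, your argument is correct and is precisely the natural route that the surrounding text sets up. You turn the symbols $M_n(\cdot)$ of the resolvents $R(\lambda_n,\M)$ into an almost-everywhere pseudo-resolvent, use testing against $x\mathbf{1}_A$ (legitimate since $\mu$ is $\sigma$-finite) plus a countable dense subset and the norm identity $\|R(\lambda_n,\M)\|=\esssup_s\|M_n(s)\|$ from \cite[Thm.~2.2.17]{ThomPhD} to collect all exceptional sets into one $\mu$-null set $\mathscr{N}$, transfer the strong convergence $\lambda_nR(\lambda_n,\M)\to\Id$ to the fibers along a diagonal subsequence, and then invoke the standard pseudo-resolvent theorem of \cite{EN} to obtain the fiber generators-style operators $(M(s),\dom(M(s)))$; your final identification $\M=\tilde{\M}$ via $f=R(\lambda_n,\M)(\lambda_n-\M)f$ in both directions is clean and complete (note, as you implicitly use, that a pseudo-resolvent which is a resolvent at one parameter is a resolvent at every parameter, which upgrades $\lambda_{n_k}\in\rho(M(s))$ to $\lambda_n\in\rho(M(s))$ for all $n$). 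The one point to make explicit: your construction genuinely requires separability of $X$ --- without it, neither the single-null-set extraction over a countable dense set nor the essential-supremum description of the symbol norm is available, and the argument as written would only produce fiber operators on separable subspaces depending on $f$. This hypothesis does not appear in the statement as printed here, but it is part of the standing framework of the cited source (and is assumed explicitly later in this paper for the fiber-bundle results), so your proof matches the intended one rather than revealing a gap.
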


Finally, if $(\M,\dom(\M))$ is already supposed to be a multiplication operator on $\Ell^p(\Omega,X)$, then the resolvent of $\M$ and the resolvents of the fiber operators are related by the following result \cite[Prop.~2.3.7]{ThomPhD}.

\begin{proposition}\label{prop:ResolventMultFiber}
Let $(\M,\dom(\M))$ be a closed multiplication operator with closed fiber operators $(M(s),\dom(M(s)))_{s\in\Omega}$. 
\begin{abc}
	\item  If $\lambda\in\rho(M(s))$ for $\mu$-almost every $s\in\Omega$ and $R(\lambda,M(\cdot))\in\Ell^\infty\left(\Omega,\LLL_\mathrm{s}(X)\right)$, then $\lambda\in\rho(\M)$ and $(R(\lambda,\M)f)(s)=R(\lambda,M(s))f(s)$ for all $f\in\Ell^p(\Omega,X)$ and $\mu$-almost every $s\in\Omega$.
	\item If there exists an unbounded sequence $(\lambda_n)_{n\in\NN}$ in $\rho(\M)$ such that for all $f\in\Ell^p(\Omega,X)$ one has $\lambda_nR(\lambda_n,\M)f\to f$ for $n\to\infty$, then for $\mu$-almost all $s\in\Omega$ and all $n\in\NN$ one has $\lambda_n\in\rho(M(s))$ and $(R(\lambda_n,\M)f)(s)=R(\lambda_n,M(s))f(s)$ for all $f\in\Ell^p(\Omega,X)$ and $\mu$-almost every $s\in\Omega$.
\end{abc}
\end{proposition}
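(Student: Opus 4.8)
The plan is to read the two parts as the fiberwise-to-global and global-to-fiberwise directions, treating (a) as a direct verification of a candidate resolvent and (b) as the genuinely delicate converse.

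For part (a), by hypothesis $R(\lambda,M(\cdot))\in\Ell^\infty(\Omega,\LLL_\mathrm{s}(X))$, so the pointwise prescription $(Rf)(s):=R(\lambda,M(s))f(s)$ defines a bounded multiplication operator on $\Ell^p(\Omega,X)$: measurability of $s\mapsto R(\lambda,M(s))f(s)$ follows by approximating $f$ by simple functions and using that each orbit $s\mapsto R(\lambda,M(s))x$ is measurable, while boundedness follows from the essential boundedness of the fiber operator norms. It then remains to check that $R$ is a two-sided inverse of $\lambda-\M$. For arbitrary $f\in\Ell^p(\Omega,X)$ put $g=Rf$; since $R(\lambda,M(s))$ maps $X$ into $\dom(M(s))$ and $M(s)R(\lambda,M(s))f(s)=\lambda g(s)-f(s)$, we get $g(s)\in\dom(M(s))$ a.e.\ and $M(\cdot)g(\cdot)=\lambda g-f\in\Ell^p(\Omega,X)$, so $g\in\dom(\M)$ and $(\lambda-\M)Rf=f$. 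Conversely, for $f\in\dom(\M)$ the identity $R(\lambda,M(s))(\lambda-M(s))f(s)=f(s)$ holds a.e., giving $R(\lambda-\M)f=f$ on $\dom(\M)$. Hence $\lambda\in\rho(\M)$ and $R=R(\lambda,\M)$, which is the asserted formula.

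For part (b), first, for every $n$ Lemma~\ref{lem:ResMultOp} yields $M_n\in\Ell^\infty(\Omega,\LLL_\mathrm{s}(X))$ with $(R(\lambda_n,\M)f)(s)=M_n(s)f(s)$. Applying the identity $(\lambda_n-\M)R(\lambda_n,\M)=\Id$ to functions $f=x\,\mathbf{1}_A$ with $x$ ranging over a countable dense subset of $X$ and $A$ over a countable family of finite-measure sets exhausting $\Omega$ (available by $\sigma$-finiteness), and invoking closedness of $M(s)$ to pass from the dense set to all of $X$, one obtains a single null set outside of which $M_n(s)X\subseteq\dom(M(s))$ and $(\lambda_n-M(s))M_n(s)=\Id$ for every $n$; in particular each $M_n(s)$ is injective. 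Pushing the resolvent identity for $\M$ through the multiplication representation shows that $(M_n(s))_n$ is a pseudoresolvent for a.e.\ $s$, so injectivity of $M_n(s)$ makes it the resolvent of a uniquely determined closed operator $B(s)$ with $\dom(B(s))=\operatorname{ran}M_n(s)\subseteq\dom(M(s))$ and $B(s)\subseteq M(s)$.

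To finish one must upgrade $B(s)\subseteq M(s)$ to equality, equivalently show that $\lambda_n-M(s)$ is injective for a.e.\ $s$; this is the main obstacle, because a single fiber is invisible to the $\Ell^p$-norm and so the assumption $\lambda_n\in\rho(\M)$ does not literally forbid an eigenvector of some individual $M(s)$. My proposed resolution is a measurable-selection argument: if the set $E_n:=\{s:\ker(\lambda_n-M(s))\neq\{0\}\}$ had positive measure, then, after restricting to a subset of finite measure and applying the Kuratowski--Ryll-Nardzewski selection theorem to the kernel-valued map $s\mapsto\ker(\lambda_n-M(s))$, one could choose a fiber-measurable section $s\mapsto w(s)$ of unit eigenvectors; the function $w\,\mathbf{1}_{E_n}$ would then lie in $\dom(\M)$ with $(\lambda_n-\M)(w\,\mathbf{1}_{E_n})=0$ and $w\,\mathbf{1}_{E_n}\neq 0$, contradicting $\lambda_n\in\rho(\M)$. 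Hence $\mu(E_n)=0$, and taking the union over $n\in\NN$ produces a null set $\mathscr{N}$ off which $\lambda_n-M(s)$ is bijective for every $n$; combined with $(\lambda_n-M(s))M_n(s)=\Id$ this identifies $M_n(s)=R(\lambda_n,M(s))$, which is the claimed formula. The role of the approximation hypothesis $\lambda_nR(\lambda_n,\M)f\to f$ is to place us in the setting of the preceding Theorem and, via $\lambda_n M_n(s)\to\Id$ strongly on a countable dense set extracted almost everywhere, to guarantee that the fiber operators are densely defined, so that the constructed $B(s)$ and the given $M(s)$ are consistent. I expect verifying the measurability of the kernel-valued map (and thereby the legitimacy of the selection, which implicitly uses separability of $X$) to be the technically heaviest point.
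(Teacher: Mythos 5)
The paper itself contains no proof of this proposition: it is imported verbatim from Thomaschewski's thesis (cited as [ThomPhD, Prop.~2.3.7]), so your attempt can only be judged on its own merits. Part (a) is correct and is the natural argument: the hypothesis $R(\lambda,M(\cdot))\in\Ell^\infty(\Omega,\LLL_{\mathrm{s}}(X))$ makes the candidate a bounded multiplication operator, and the two-sided inverse identities are verified fiberwise. One small point you gloss over: $\Ell^\infty(\Omega,\LLL_{\mathrm{s}}(X))$ only gives \emph{orbitwise} essential boundedness, so "boundedness follows from the essential boundedness of the fiber operator norms" needs an intermediate closed-graph/uniform-boundedness argument (this is exactly the characterization of bounded multiplication operators alluded to after Lemma~\ref{lem:ResMultOp}); with that supplied, (a) is complete. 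Likewise, in (b) the pseudoresolvent skeleton is sound: writing $R(\lambda_n,\M)$ as $M_n(\cdot)$ via Lemma~\ref{lem:ResMultOp}, testing $(\lambda_n-\M)R(\lambda_n,\M)=\Id$ on countably many functions $x\,\mathbf{1}_A$, and using closedness of $M(s)$ to obtain $(\lambda_n-M(s))M_n(s)=\Id$ off a single null set is the right bookkeeping (though note that the proposition as stated does not assume $X$ separable, while both this step and your selection theorem require it).

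The genuine gap is exactly where you locate it, and your proposed resolution does not close it. The Kuratowski--Ryll-Nardzewski theorem requires the multifunction $s\mapsto\ker(\lambda_n-M(s))$ to be measurable (weakly measurable with closed values), and nothing in the hypotheses provides this: the fiber operators enter the definition of $\M$ only through the pointwise membership conditions on $\dom(\M)$, and no measurable structure whatsoever is assumed on their graphs, domains, or kernels. Worse, the argument is circular at the decisive point: what your contradiction actually shows is that no \emph{measurable} unit kernel field $w\,\mathbf{1}_{E_n}$ can exist (since such a $w\,\mathbf{1}_{E_n}$ lies in $\dom(\M)$ and violates injectivity of $\lambda_n-\M$), but the existence of a measurable selection is precisely what KRN was supposed to deliver, and it is exactly the measurability that fails to follow from the hypotheses. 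Indeed, nonmeasurable kernels are invisible to $\M$: for every $f\in\dom(\M)$ the identity $f=R(\lambda_n,\M)(\lambda_n-\M)f$ gives $f(s)=M_n(s)\bigl((\lambda_n-M(s))f(s)\bigr)$ almost everywhere, so admissible functions automatically take values in $\operatorname{ran}M_n(s)$ and never detect a fiber kernel. Consequently your proof establishes $\lambda_n\in\rho(B(s))$ a.e.\ for the fiber operators $B(s)$ generated by the pseudoresolvent (with $B(s)\subseteq M(s)$, and densely defined thanks to the hypothesis $\lambda_nR(\lambda_n,\M)f\to f$, as in the unnamed theorem preceding the proposition), but the upgrade $B(s)=M(s)$ a.e.\ --- equivalently, injectivity of $\lambda_n-M(s)$ a.e.\ for the \emph{given} fibers --- remains unproven; any complete proof must use more than the stated hypotheses plus a selection argument, e.g.\ an assumed measurable structure on the fiber family, which is where the cited thesis proof would have to be consulted.
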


In \cite[Sect.~2.2.3]{ThomPhD} S.~Thomaschewski 
studies multiplication semigroups on $\Ell^p(\Omega,X)$.
The definition is as follows.

\begin{definition}
A $C_0$-semigroup $(\mathcal{T}(t))_{t\geq0}$ on $\Ell^p(\Omega,X)$ is called a \emph{multiplication semigroup} if for every $t\geq0$ the operator $\mathcal{T}(t)$ is a bounded multiplication operator, i.e., for every $t\geq0$ there exists $T_{(\cdot)}(t)\in\Ell^{\infty}\left(\Omega,\LLL_\mathrm{s}(X)\right)$ such that $(\mathcal{T}(t)f)(s)=T_s(t)f(s)$ for $\mu$-almost every $s\in\Omega$.
\end{definition}

By \cite[Thm.~2.3.15]{ThomPhD}, stated below, these multiplication semigroups have unbounded multiplication operators as generators.


\begin{theorem}\label{thm:MultGenC0Thom}
Let $(\M,\dom(\M))$ be the generator of a strongly continuous semigroup $(\mathcal{T}(t))_{t\geq0}$ on $\Ell^p(\Omega,X)$ such that $\left\|\mathcal{T}(t)\right\|\leq M\ee^{\omega t}$ for some $M\geq0$, $\omega\in\RR$ and for all $t\geq0$. The following are equivalent.
\begin{abc}
	\item $(\mathcal{T}(t))_{t\geq0}$ is a multiplication semigroup.
	\item $(\M,\dom(\M))$ is an unbounded operator-valued multiplication operator with fiber operators\\ $(M(s),\dom(M(s)))_{s\in\Omega}$. Moreover, for $\mu$-almost every $s\in\Omega$, $\lambda\in\rho(M(s))$ whenever $\mathrm{Re}(\lambda)>\omega$, $(R(\lambda,\M)f)(\cdot)=R(\lambda,M(\cdot))f(\cdot)$ and $(M(s),\dom(M(s)))$ is the generator of a $C_0$-semigroup $(T_s(t))_{t\geq0}$ such that $(\mathcal{T}(t)f)(s)=T_s(t)f(s)$ for all $t\geq0$.
\end{abc}
\end{theorem}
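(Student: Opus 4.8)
The plan is to establish the two implications separately, with (b)$\Rightarrow$(a) being routine and the bulk of the work going into (a)$\Rightarrow$(b).

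For (b)$\Rightarrow$(a) I would argue straight from the definitions. Assuming that $\M$ is the multiplication operator with fiber operators $M(s)$ and that $(\mathcal{T}(t)f)(s)=T_s(t)f(s)$, it suffices to check that each $\mathcal{T}(t)$ is a bounded multiplication operator, i.e.\ that $T_{(\cdot)}(t)\in\Ell^{\infty}(\Omega,\LLL_{\mathrm{s}}(X))$. Writing $\Omega=\bigcup_n\Omega_n$ with $\mu(\Omega_n)<\infty$ by $\sigma$-finiteness and applying $\mathcal{T}(t)$ to the functions $x\mathbf{1}_{\Omega_n}$ shows that $s\mapsto T_s(t)x$ is measurable for every $x\in X$, while testing the bound $\|\mathcal{T}(t)\|\le M\ee^{\omega t}$ against the same functions gives $\|T_s(t)\|\le M\ee^{\omega t}$ for $\mu$-a.e.\ $s$. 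Hence $T_{(\cdot)}(t)\in\Ell^{\infty}(\Omega,\LLL_{\mathrm{s}}(X))$ and $(\mathcal{T}(t))_{t\ge 0}$ is a multiplication semigroup.

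For the converse (a)$\Rightarrow$(b) I would exploit the Laplace representation of the resolvent. Since $\|\mathcal{T}(t)\|\le M\ee^{\omega t}$, every $\lambda$ with $\mathrm{Re}(\lambda)>\omega$ lies in $\rho(\M)$ and $R(\lambda,\M)f=\int_0^{\infty}\ee^{-\lambda t}\mathcal{T}(t)f\,\dd t$. Evaluating fibrewise and interchanging the Bochner integral in $t$ with evaluation at $s$ — justified by approximating the integral by Riemann sums, which converge in $\Ell^p$ and hence $\mu$-a.e.\ along a subsequence — I expect $(R(\lambda,\M)f)(s)=\int_0^{\infty}\ee^{-\lambda t}T_s(t)f(s)\,\dd t$ for $\mu$-a.e.\ $s$, so that $R(\lambda,\M)$ is a bounded multiplication operator with fibres $R_s(\lambda):=\int_0^{\infty}\ee^{-\lambda t}T_s(t)\,\dd t$. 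Choosing the unbounded real sequence $\lambda_n:=\omega+n\in\rho(\M)$, which satisfies $\lambda_nR(\lambda_n,\M)f\to f$ because $\M$ generates a $C_0$-semigroup, the identification theorem stated just before Proposition~\ref{prop:ResolventMultFiber} produces densely defined closed fiber operators $(M(s),\dom(M(s)))$ with $\M$ as their multiplication operator and $\lambda_n\in\rho(M(s))$ for $\mu$-a.e.\ $s$; Proposition~\ref{prop:ResolventMultFiber}(b) then identifies $R(\lambda_n,M(s))=R_s(\lambda_n)$ $\mu$-a.e.

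It remains to show that for $\mu$-a.e.\ $s$ the family $T_s(\cdot)$ is a $C_0$-semigroup generated by $M(s)$. I would transfer the relations $\mathcal{T}(0)=\Id$, $\mathcal{T}(t+u)=\mathcal{T}(t)\mathcal{T}(u)$ and the exponential bound from $\Ell^p$ to the fibers by testing against a countable family of functions assembled from $\B$ and the sets $\Omega_n$, so as to obtain the semigroup law and $\|T_s(t)\|\le M\ee^{\omega t}$ on a single common $\mu$-null exceptional set; here the countability built into the fiber bundle is what makes a common null set possible. Local boundedness together with measurability of $t\mapsto T_s(t)x$ and the semigroup law then upgrade this to strong continuity, so $T_s(\cdot)$ is a $C_0$-semigroup; since its resolvent equals $R_s(\lambda)=R(\lambda,M(s))$ and a $C_0$-semigroup is determined by the resolvent of its generator, $M(s)$ is exactly the generator of $T_s(\cdot)$, and the remaining assertions (that $\lambda\in\rho(M(s))$ for $\mathrm{Re}(\lambda)>\omega$ and the fibrewise resolvent formula) follow from standard semigroup theory together with Proposition~\ref{prop:ResolventMultFiber}(a). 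The hard part will be precisely this last step: converting the a.e.-in-$s$ and a.e.-in-$t$ statements into a single null set valid simultaneously for all $t,u\ge 0$, and bootstrapping a.e.\ convergence into genuine strong continuity of each fiber semigroup.
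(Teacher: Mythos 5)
First, a point of order: the paper contains no proof of Theorem \ref{thm:MultGenC0Thom} at all — it is quoted verbatim from \cite[Thm.~2.3.15]{ThomPhD} — so your attempt can only be measured against the argument in that thesis. Your direction (b)$\Rightarrow$(a) is essentially sound, with one routine repair: testing $\left\|\mathcal{T}(t)\right\|\leq M\ee^{\omega t}$ only against the functions $x\mathbf{1}_{\Omega_n}$ yields integrated bounds, not the pointwise one; you must test against $x\mathbf{1}_E$ for \emph{all} measurable $E\subseteq\Omega_n$ of finite measure, since $\int_E\left\|T_s(t)x\right\|^p\,\dd\mu\leq\left(M\ee^{\omega t}\left\|x\right\|\right)^p\mu(E)$ for every such $E$ is what forces $\left\|T_s(t)x\right\|\leq M\ee^{\omega t}\left\|x\right\|$ $\mu$-almost everywhere.

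The genuine gap is in (a)$\Rightarrow$(b), and it is exactly the one you flag at the end without closing it. From (a), the fibers $T_s(t)$ are defined for each fixed $t$ only up to a $t$-dependent null set, with no joint measurability in $(s,t)$. Consequently, (i) the fiberwise Laplace integral $\int_0^\infty\ee^{-\lambda t}T_s(t)f(s)\,\dd t$ is not even well defined for fixed $s$: your Riemann-sum subsequence does give $\mu$-a.e.\ convergence of the sums to $(R(\lambda,\M)f)(s)$, but identifying that limit as this integral presupposes measurability (indeed continuity) of $t\mapsto T_s(t)f(s)$ for fixed $s$, which is unknown at that stage; and (ii) testing countably many functions can deliver the semigroup law $T_s(t+u)=T_s(t)T_s(u)$ on a common null set only for countably many pairs $(t,u)$, say rational ones, and extending to all $t,u\geq0$ requires precisely the strong continuity in $t$ you are trying to establish — the bootstrap is circular. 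The ``countability built into the fiber bundle'' cannot help: Theorem \ref{thm:MultGenC0Thom} lives on the plain Bochner space $\Ell^p(\Omega,X)$, no countable set $\B$ is part of the data (nor is $X$ assumed separable), and the obstruction is the uncountability of the time parameter, not of the test functions. The standard repair, which is the route of \cite[Thm.~2.3.15]{ThomPhD}, reverses your logic: one shows $R(\lambda,\M)(\mathbf{1}_Ef)=\mathbf{1}_ER(\lambda,\M)f$ directly from the Laplace representation (each $\mathcal{T}(t)$ commutes with multiplication by $\mathbf{1}_E$, and this survives the strong integral), whence $R(\lambda,\M)$ is a bounded multiplication operator by the characterization \cite[Thm.~2.2.17]{ThomPhD}; the identification theorem then produces the fiber operators $(M(s),\dom(M(s)))$; the Hille--Yosida estimates $\left\|R(\lambda_n,M(s))^k\right\|\leq M(\lambda_n-\omega)^{-k}$ are verified $\mu$-a.e.\ through countably many conditions in $n$ and $k$; one then \emph{defines} $T_s(t)$ as the semigroup generated by $M(s)$ — thereby defined for all $t\geq0$ with no null-set problem and automatically strongly continuous — and finally recovers $(\mathcal{T}(t)f)(s)=T_s(t)f(s)$ by pushing Euler/Yosida approximants, which converge both in $\Ell^p(\Omega,X)$ and fiberwise, through an a.e.-convergent subsequence. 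As written, your proposal does not reach statement (b).
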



\section{Extrapolation of unbounded multiplication operators}\label{sec:ExtraFiberMultSemi}

We recall the construction of extrapolation spaces of unbounded operators, 
\CB{
as described, for example, in \cite[Chapter II, Sect.~5(a)]{EN}, \cite{N1997} or in a more general framework in \cite{BF2019}: Let $(T(t))_{t\geq0}$ be a $C_0$-semigroup on a Banach space $X$ with generator $(A,\dom(A))$. Without loss of generality one may assume that $0\in\rho(A)$. On $X$ one defines a new norm $\left\|\cdot\right\|_{-1}$ by
\[
\left\|x\right\|_{-1}:=\left\|A^{-1}x\right\|,\quad x\in X.
\]
The completion of $X$ with respect to $\left\|\cdot\right\|_{-1}$ is called the \emph{(first) extrapolation space} and will be denoted by $X_{-1}$. The original space $X$ is densely embedded in $X_{-1}$. By continuity one can extend the original semigroup $(T(t))_{t\geq0}$ to a $C_0$-semigroup $(T_{-1}(t))_{t\geq0}$ on $X_{-1}$. The corresponding generator is denoted by $(A_{-1},\dom(A_{-1}))$. As a matter of fact, one obtains $\dom(A_{-1})=X$ and $A_{-1}:X\to X_{-1}$ becomes an isometric isomorphism.}\\
We now consider a multiplication operator $(\M,\dom(\M))$ generating a multiplication semigroup $(\mathcal{T}(t))_{t\geq0}$, cf. Theorem \ref{thm:MultGenC0Thom}. By $(M(s),\dom(M(s)))_{s\in\Omega}$ we denote the fiber operators corresponding to $(\M,\dom(\M))$, i.e., $(\M f)(s)=M(s)f(s)$, $f\in\dom(\M)$ and $s\in\Omega$. By Theorem \ref{thm:MultGenC0Thom} the operator $(M(s),\dom(M(s)))$, $s\in\Omega$, generates a $C_0$-semigroup $(T_s(t))_{t\geq0}$. We assume without loss of generality that $0\in\rho(M(s))$ for $\mu$-almost every $s\in\Omega$. The extrapolated operators will be denoted by $(M_{-1}(s),\dom(M_{-1}(s)))$, $s\in\Omega$. The extrapolation space of $\Ell^p\left(\Omega,X\right)$ corresponding to the operator $(\M,\dom(\M))$ will be denoted by $\mathcal{X}:=\left(\Ell^p(\Omega,X)\right)_{-1}(\M)$. \CB{By the extrapolation procedure described above, $\mathcal{X}$} is formally given by
\[
\mathcal{X}=\left\{f\in\prod_{s\in\Omega}{X_{-1,s}}:\ \exists g\in\Ell^p(\Omega,X):\ f=M_{-1}(\cdot)g\right\}
\]
The norm $\left\|f\right\|_{\mathcal{X}}:=\left(\int_\Omega{\left\|f(s)\right\|^p_{-1,s}\ \dd\mu(s)}\right)^{1/p}$ turns $\mathcal{X}$ into a Banach space. 

\begin{lemma}\label{lem:ExtMultSemiAssumpGen}
Let $(\M,\dom(\M))$ be a multiplication operator on $\Ell^p(\Omega,X)$ with fiber operators\\ $(M(s),\dom(M(s)))_{s\in\Omega}$. Moreover, let $(\mathcal{T}(t))_{t\geq0}$ the multiplication semigroup \CB{of type $(M,\omega)$} generated by $(\M,\dom(\M))$. Moreover, denote by $(T_s(t))_{t\geq0}$ the $C_0$-semigroups of type $(M,\omega)$ generated by the fiber operators $(M(s),\dom(M(s)))_{s\in\Omega}$. The associated extrapolated semigroups are denoted by $(T_{-1,s}(t))_{t\geq0}$, $s\in\Omega$. Define
\[
(\mathcal{S}(t)f)(s):=T_{-1,s}(t)f(s),\quad t\geq0,\ f\in\mathcal{X},\ s\in\Omega.
\]
This defines a $C_0$-semigroup on $\mathcal{X}$ which is generated by the operator $(\M_{-1},\dom(\M_{-1}))$ defined by
\begin{align}\label{eqn:ExtraMultFiberOp}
(\M_{-1}f)(s)=M_{-1}(s)f(s),\quad \dom(\M_{-1})=\Ell^p(\Omega,X).
\end{align}
\end{lemma}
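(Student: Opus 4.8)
The plan is to exhibit $(\mathcal{S}(t))_{t\geq0}$ as the image of the original multiplication semigroup $(\mathcal{T}(t))_{t\geq0}$ under conjugation by a single isometric isomorphism between $\Ell^p(\Omega,X)$ and $\mathcal{X}$, so that the semigroup law, the strong continuity and the computation of the generator all transfer by general principles. First I would record the fiber-wise consequences of single-operator extrapolation applied to each $(M(s),\dom(M(s)))$ (recall $0\in\rho(M(s))$ for $\mu$-a.e.\ $s$): the extrapolated operator $M_{-1}(s)$ has domain $X$ and is an isometric isomorphism from $X$ onto $X_{-1,s}$ for the norm $\|\cdot\|_{-1,s}$, and, since $(T_{-1,s}(t))_{t\geq0}$ extends $(T_s(t))_{t\geq0}$ and is generated by $M_{-1}(s)$, the intertwining relation $T_{-1,s}(t)M_{-1}(s)=M_{-1}(s)T_s(t)$ holds on $X$ for all $t\geq0$.

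Next I would assemble the fiber isometries into the map $\Phi\colon\Ell^p(\Omega,X)\to\mathcal{X}$ given by $(\Phi g)(s):=M_{-1}(s)g(s)$. Because each $M_{-1}(s)$ is isometric, $\|\Phi g\|_{\mathcal{X}}^p=\int_\Omega\|M_{-1}(s)g(s)\|_{-1,s}^p\,\dd\mu(s)=\int_\Omega\|g(s)\|_X^p\,\dd\mu(s)$, so $\Phi$ preserves the norm; its surjectivity is precisely the defining description of $\mathcal{X}$, whence $\Phi$ is an isometric isomorphism. I would also note that $\Phi$ is nothing but the operator $\M_{-1}$ of \eqref{eqn:ExtraMultFiberOp}, and that, on $\Ell^p(\Omega,X)\subseteq\mathcal{X}$, its inverse restricts to $\M^{-1}$.

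Then I would check the conjugation identity $\mathcal{S}(t)=\Phi\,\mathcal{T}(t)\,\Phi^{-1}$. Writing $f=\Phi g$ with $g\in\Ell^p(\Omega,X)$, the fiber intertwining yields $(\mathcal{S}(t)f)(s)=T_{-1,s}(t)M_{-1}(s)g(s)=M_{-1}(s)(\mathcal{T}(t)g)(s)$, that is $\mathcal{S}(t)f=\Phi(\mathcal{T}(t)g)$. This already shows $\mathcal{S}(t)f\in\mathcal{X}$, so that measurability and membership are automatic, and $\|\mathcal{S}(t)\|=\|\mathcal{T}(t)\|$. Conjugating the semigroup law and the strong continuity of $(\mathcal{T}(t))_{t\geq0}$ by the isometric isomorphism $\Phi$ then shows that $(\mathcal{S}(t))_{t\geq0}$ is a $C_0$-semigroup on $\mathcal{X}$.

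Finally I would identify the generator. Conjugation by $\Phi$ transforms the generator $(\M,\dom(\M))$ of $(\mathcal{T}(t))_{t\geq0}$ into the operator $\Phi\M\Phi^{-1}$ with domain $\Phi(\dom(\M))$. For $h\in\Ell^p(\Omega,X)$ one has $\Phi^{-1}h=\M^{-1}h\in\dom(\M)$, hence $\Phi\M\Phi^{-1}h=\Phi h=M_{-1}(\cdot)h=\M_{-1}h$, while $\Phi(\dom(\M))=\M(\dom(\M))=\Ell^p(\Omega,X)$ since $0\in\rho(\M)$. This is exactly the operator in \eqref{eqn:ExtraMultFiberOp}. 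I expect the only real work to lie in the fiber-level bookkeeping of the first two paragraphs --- confirming that $M_{-1}(s)$ is an isometry of $X$ onto $X_{-1,s}$ and that $\Phi^{-1}$ agrees with $\M^{-1}$ on $\Ell^p(\Omega,X)$ --- after which the semigroup and generator claims follow formally from the conjugation.
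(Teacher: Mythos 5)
Your argument is correct, but it takes a genuinely different route from the paper's. The paper verifies everything by hand: the semigroup law is checked fiberwise, strong continuity is deduced from the density of $\Ell^p(\Omega,X)$ in $\mathcal{X}$ together with the fact that $(\mathcal{S}(t))_{t\geq0}$ extends $(\mathcal{T}(t))_{t\geq0}$, and the generator $(\mathcal{A},\dom(\mathcal{A}))$ is identified with $(\M_{-1},\dom(\M_{-1}))$ by two separate inclusions --- a resolvent argument (using $\lambda\in\rho(M_{-1}(s))$ for $\lambda>\omega$ and a.e.\ $s$ to get $\lambda\in\rho(\M_{-1})$, then comparing with $R(\lambda,\mathcal{A})$) for $\mathcal{A}\subseteq\M_{-1}$, and the fiberwise integral identity $T_{-1,s}(t)x-x=\int_0^t{T_{-1,s}(r)M_{-1}(s)x\ \dd r}$ for $\M_{-1}\subseteq\mathcal{A}$. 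You instead globalize the standard similarity picture of extrapolation: each $M_{-1}(s)\colon X\to X_{-1,s}$ is an isometric isomorphism intertwining $T_s(t)$ and $T_{-1,s}(t)$, so $\Phi=\M_{-1}$ is an isometric isomorphism of $\Ell^p(\Omega,X)$ onto $\mathcal{X}$ (surjectivity being exactly the paper's formal description of $\mathcal{X}$, and the identity $(\mathcal{T}(t)g)(s)=T_s(t)g(s)$ coming from Theorem \ref{thm:MultGenC0Thom}), whence $\mathcal{S}(t)=\Phi\mathcal{T}(t)\Phi^{-1}$ and the $C_0$-property plus the generator $\Phi\M\Phi^{-1}$ with domain $\Phi(\dom(\M))=\Ell^p(\Omega,X)$ follow at one stroke from the similarity transform. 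Your route buys a cleaner and arguably more robust generator identification (it sidesteps the paper's somewhat delicate resolvent computation and its integral formula, in which the time variable is even miswritten as $\dd\mu(s)$), at the price of the fiber bookkeeping you flag --- all of which is standard and recorded in the paper's recap of extrapolation ($A_{-1}\colon X\to X_{-1}$ is an isometric isomorphism with $\dom(A_{-1})=X$). The one point you should make explicit is $0\in\rho(\M)$, which you need both for $\Phi(\dom(\M))=\Ell^p(\Omega,X)$ and for $\Phi^{-1}=\M^{-1}$ on $\Ell^p(\Omega,X)$: after the paper's normalization $0\in\rho(M(s))$ a.e., the uniform type $(M,\omega)$ hypothesis yields $R(0,M(\cdot))\in\Ell^{\infty}\left(\Omega,\LLL_\mathrm{s}(X)\right)$, and then $0\in\rho(\M)$ follows from Proposition \ref{prop:ResolventMultFiber}(a) --- the same fact the paper itself uses implicitly when defining $\mathcal{X}$.
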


\begin{proof}
First of all, to see that $(\mathcal{S}(t))_{t\geq0}$ is indeed a semigroup is easy, since $(T_{-1,s}(t))_{t\geq0}$ is a semigroup for each $s\in\Omega$. As a matter of fact, $(T_{-1,s}(t))_{t\geq}$ is an extension of $(T_s(t))_{t\geq0}$ for each $s\in\Omega$ and hence $(\mathcal{S}(t))_{t\geq0}$ extends $(\mathcal{T}(t))_{t\geq0}$. Since, by construction, $\Ell^p(\Omega,X)$ is dense in $\mathcal{X}$, the semigroup $(\mathcal{S}(t))_{t\geq0}$ is strongly continuous. In order to show that the generator of $(\mathcal{S}(t))_{t\geq0}$ is of the form mentioned in the lemma, let us denote the generator of $(\mathcal{S}(t))_{t\geq0}$ by $(\mathcal{A},\dom(\mathcal{A}))$. \CB{We will show that $(\mathcal{A},\dom(\mathcal{A}))=(\M_{-1},\dom(\M_{-1}))$. Firstly, assume that $f\in\dom(\mathcal{A})$, then since $\lambda\in\rho(M_{-1}(s))$ for all $\lambda>\omega$ and almost every $s\in\Omega$ we conclude that $\lambda\in\rho(\M_{-1})$. By assumption $(\mathcal{A},\dom(\mathcal{A}))$ is the generator of $(\mathcal{S}(t))_{t\geq0}$ with $\left\|\mathcal{S}(t)\right\|\leq M\ee^{\omega t}$ meaning that $\lambda\in\rho(\mathcal{A})$. From surjectivity one obtain $f\in\dom(\mathcal{A})$, $g\in\mathcal{X}$ and $h\in\dom(\M_{-1})$ satisfying the following equality
\[
f=R(\lambda,\mathcal{A})g=R(\lambda,\mathcal{A})(\lambda-\M)h=R(\lambda,\mathcal{A})(\lambda-\mathcal{A})h=h\in\dom(\M_{-1}),
\]
showing that $\mathcal{A}\subseteq\M_{-1}$. For the converse, let $f\in\dom(\M_{-1})$ and observe that from the fact $f(s)\in\dom(M_{-1}(s))$ for almost every $s\in\Omega$ one conclude that
\[
(\mathcal{S}(t)f)(s)-f(s)=T_{-1,s}(t)f(s)-f(s)=\int_0^t{T_{-1,s}(t)M_{-1}(s)f(s)\ \dd\mu(s)}=\int_0^t{(\mathcal{S}(t)\mathcal{M}_{-1}f)(s)\ \dd\mu(s)},
\]
showing that $\M_{-1}\subseteq\mathcal{A}$, which concludes the proof.
}\end{proof}


The previous result shows actually that the extrapolated multiplication operator is again a multiplication operator. In this case the fiber operators are the extrapolated fiber operators\\ $(M_{-1}(s),\dom(M_{-1}(s)))_{s\in\Omega}$, i.e., \eqref{eqn:ExtraMultFiberOp} holds. 
We now characterize the space $\mathcal{X}:=\left(\Ell^p(\Omega,X)\right)_{-1}(\M)$. Assume 
that the Banach space $X$ we are working with is separable, i.e., there exists a countable dense set in $X$. Denote the extrapolation spaces corresponding to the fiber operator $(M(s),\dom(M(s))$ by $\left(X_{-1,s},\left\|\cdot\right\|_{-1,s}\right)$, $s\in\Omega$. The following result prepares for the extrapolation procedure. .
\begin{lemma}\label{lem:BFSExtrap}
Suppose $X$ is a separable Banach space. If $0\in\rho(M(s))$ for almost every $s\in\Omega$ and $s\mapsto M(s)^{-1}x$ is measurable for each $x\in X$, then the family $\left(X_{-1,s},\left\|\cdot\right\|_{-1,s}\right)_{s\in\Omega}$ is a measurable Banach fiber bundle.
\end{lemma}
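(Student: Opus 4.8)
The plan is to verify directly that the concrete family $(X_{-1,s},\|\cdot\|_{-1,s})_{s\in\Omega}$ fits the abstract scheme of Definition~\ref{def:mBfs}. First I would take as underlying complex vector space $V:=X$ itself and define, for each $s\in\Omega$, the seminorm $\opnorm{x}_s:=\|M(s)^{-1}x\|$ for $x\in X$. Since $0\in\rho(M(s))$ for almost every $s$, the operator $M(s)^{-1}\in\LLL(X)$ is injective, so each $\opnorm{\cdot}_s$ is in fact a norm and coincides with the extrapolation norm $\|\cdot\|_{-1,s}$. The crucial use of the separability hypothesis comes next: I would fix a countable dense subset of $X$ and let $\B$ be its linear span over $\QQ+i\QQ$. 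This set is countable, is a $\QQ+i\QQ$-vector space, and remains dense in $X$, so it is an admissible choice of generating set in the sense of the definition.

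Second, I would check the two defining requirements. For measurability, the hypothesis that $s\mapsto M(s)^{-1}x$ is measurable for each $x\in X$, together with continuity of the norm on $X$, yields that $s\mapsto\opnorm{b_k}_s=\|M(s)^{-1}b_k\|$ is measurable for every $b_k\in\B$, as required. It then remains to identify the Banach space produced by the construction with $X_{-1,s}$. As each $\opnorm{\cdot}_s$ is a norm, the null space $N_s$ is trivial and $\B/N_s=\B$. Because $\|\cdot\|_{-1,s}\leq\|M(s)^{-1}\|\,\|\cdot\|$, density of $\B$ in $(X,\|\cdot\|)$ forces density of $\B$ in $(X,\|\cdot\|_{-1,s})$; hence the completion of $\B/N_s$ with respect to $\|\cdot\|_{-1,s}$ agrees with the completion of $X$, which is exactly $X_{-1,s}$. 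This is the content of being a measurable Banach fiber bundle.

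The step I expect to require the most care is this last identification, i.e.\ making precise that the completion of $\B$ in the $\|\cdot\|_{-1,s}$-norm really reproduces $X_{-1,s}$ for almost every $s$, rather than some proper closed subspace. The measurability itself is routine once one invokes Pettis's theorem for the separable space $X$, but one should be attentive that the exceptional $\mu$-null set on which $0\notin\rho(M(s))$ does not interfere with the global measurability of $s\mapsto\opnorm{b_k}_s$; this can be handled by redefining $M(s)^{-1}$ arbitrarily (say as the identity) on that null set, which affects neither the fiber spaces nor the $\Ell^p$-fiber norm almost everywhere.
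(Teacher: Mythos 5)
Your proposal is correct and follows essentially the same route as the paper's proof: both take $\B$ to be the $\QQ+i\QQ$-span of a countable dense subset of $X$, define $\opnorm{x}_s:=\left\|M(s)^{-1}x\right\|$, obtain measurability of $s\mapsto\opnorm{b_k}_s$ directly from the hypothesis, note $N_s=\left\{0\right\}$, and identify the completion of $\B$ with $X_{-1,s}$. Your additional detail on the last step --- using $\opnorm{x}_s\leq\left\|M(s)^{-1}\right\|\left\|x\right\|$ to transfer density of $\B$ from $(X,\left\|\cdot\right\|)$ to the extrapolation norm --- makes explicit a point the paper's proof leaves implicit, and is a welcome refinement rather than a departure.
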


\begin{proof}
We make use of the separability of $X$ and take a dense countable subset of $X$ and make a $\QQ+i\QQ$ vector space $\mathcal{B}$ out of it. Observe that $\mathcal{B}$ is still countable, i.e, $\B:=\left\{b_k:\ k\in\NN\right\}\subseteq X$. We define a family of seminorms $\left\{\opnorm{\cdot}_s:\ s\in\Omega\right\}$ on $X$ by
\[
\opnorm{x}_{s}:=\left\|M(s)^{-1}x\right\|,\quad x\in X,\ s\in\Omega.
\]
Then $\opnorm{\cdot}_s$ is actually a norm on $X$. By the assumption $s\mapsto M(s)^{-1}x$ is measurable for each $x\in X$ and hence so is the map $s\mapsto\left\|b_k\right\|_s$ for each $k\in\NN$. Since $N_s=\left\{0\right\}$ for each $s\in\Omega$ we obtain $\B/N_s=\B$. Finally, the completion of $\B$ with respect to $\opnorm{\cdot}_s$ is just the space $X_{-1,s}$, $s\in\Omega$. By Definition \ref{def:mBfs} we therefore obtain that $\left(X_{-1,s},\left\|\cdot\right\|_{-1,s}\right)_{s\in\Omega}$ is a measurable Banach fiber bundle.
\end{proof}

Since we know that $\left(X_{-1,s},\left\|\cdot\right\|_{-1,s}\right)_{s\in\Omega}$ is a measurable Banach fiber bundle, we can consider the space of fiber $p$-integrable functions over this set of Banach spaces. In what follows we relate this space to the extrapolation space of $\Ell^p(\Omega,(X_s)_{s\in\Omega})$ with respect to the 
 operator-valued multiplication operator $(\M,\dom(\M))$.

\begin{theorem}\label{cor:FibLp}
Let $1\leq p<\infty$ and consider the unbounded multiplication operator $(\M,\dom(\M))$ on $\Ell^p(\Omega,X)$, induced by the family of unbounded operators $(M(s),\dom(M(s)))_{s\in\Omega}$ on $X$. Let $(M(s),\dom(M(s)))$ be a semigroup generator for $\mu$-almost every $s\in\Omega$. Suppose that $0\in\rho(M(s))$ for $\mu$-almost every $s\in\Omega$ and that $s\mapsto M(s)b$ and $s\mapsto M(s)^{-1}b$ are measurable for each $b\in\B$. Then
\[
\left[\Ell^p(\Omega,X)\right]_{-1}(\M)=\Ell^p(\Omega,(X_{-1,s})_{s\in\Omega}).
\]
\end{theorem}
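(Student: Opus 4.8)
The plan is to show the two spaces coincide by exhibiting a natural isometric isomorphism between them. Recall that by the extrapolation procedure, $\mathcal{X}=\left[\Ell^p(\Omega,X)\right]_{-1}(\M)$ is the completion of $\Ell^p(\Omega,X)$ with respect to the norm $\left\|f\right\|_{-1}:=\left\|\M^{-1}f\right\|_p$, where we use $0\in\rho(\M)$ (which follows from $0\in\rho(M(s))$ for $\mu$-almost every $s$ together with Proposition \ref{prop:ResolventMultFiber}(a), since $\M^{-1}=R(0,\M)$ acts fiberwise as $M(s)^{-1}$). On the other side, Lemma \ref{lem:BFSExtrap} guarantees that $\left(X_{-1,s},\left\|\cdot\right\|_{-1,s}\right)_{s\in\Omega}$ is a genuine measurable Banach fiber bundle, so that $\Ell^p(\Omega,(X_{-1,s})_{s\in\Omega})$ is a well-defined Banach space by the fourth item of the Remark following Definition \ref{def:FibInt}. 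The measurability hypotheses on $s\mapsto M(s)b$ and $s\mapsto M(s)^{-1}b$ are exactly what is needed both to invoke Lemma \ref{lem:BFSExtrap} and to make the pointwise extrapolation norm $\opnorm{x}_s=\left\|M(s)^{-1}x\right\|$ well-behaved.

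First I would define the candidate map. For $f\in\Ell^p(\Omega,X)$, the norm $\left\|f\right\|_{-1}=\left(\int_\Omega\left\|M(s)^{-1}f(s)\right\|^p\,\dd\mu(s)\right)^{1/p}$, and since $\left\|M(s)^{-1}f(s)\right\|=\opnorm{f(s)}_s=\left\|f(s)\right\|_{-1,s}$ by the very definition of the fiber norm on $X_{-1,s}$, we see immediately that $\left\|f\right\|_{-1}=\left\|f\right\|_{\Ell^p(\Omega,(X_{-1,s}))}$ on the common dense subspace $\Ell^p(\Omega,X)$. Thus the inclusion $\Ell^p(\Omega,X)\hookrightarrow\Ell^p(\Omega,(X_{-1,s})_{s\in\Omega})$ is an isometry for the relevant norms. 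The strategy is then to show this inclusion has dense range, so that it extends to an isometric isomorphism from the completion $\mathcal{X}$ onto $\Ell^p(\Omega,(X_{-1,s})_{s\in\Omega})$.

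The key steps in order are: (1) verify that the fiber norm on $X_{-1,s}$ restricted to elements of $X$ agrees with $\opnorm{\cdot}_s$, which is immediate from Lemma \ref{lem:BFSExtrap} and its proof; (2) conclude the isometric identity of norms on $\Ell^p(\Omega,X)$ displayed above; (3) show $\Ell^p(\Omega,X)$, viewed inside $\Ell^p(\Omega,(X_{-1,s})_{s\in\Omega})$, is dense. For step (3) I would use that, by Definition \ref{def:fibrem}, every fiber measurable function is an almost-everywhere pointwise limit of simple functions with values in $\B\subseteq X$; combining this with the fiber $p$-integrability and a dominated-convergence argument shows that $\B$-valued simple functions are dense in $\Ell^p(\Omega,(X_{-1,s})_{s\in\Omega})$, and these simple functions lie in $X$, hence in $\Ell^p(\Omega,X)$ after observing that $M(\cdot)$ applied to them is again fiber $p$-integrable. (4) Since both $\mathcal{X}$ and $\Ell^p(\Omega,(X_{-1,s}))$ arise as completions of $\Ell^p(\Omega,X)$ under the same norm, the identity map extends uniquely to the claimed isometric isomorphism, giving the equality.

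The main obstacle I anticipate is step (3), the density argument, and in particular the interplay between the two notions of approximation. One must be careful that the simple functions furnished by Definition \ref{def:fibrem} approximate in the $\left\|\cdot\right\|_{-1,s}$-norm pointwise, and then upgrade this to convergence in the $\Ell^p$-fiber norm; this requires producing an integrable dominating function for $s\mapsto\left\|f_j(s)-f(s)\right\|_{-1,s}^p$, which is where the $\sigma$-finiteness of $\mu$ and a truncation argument enter. A secondary subtlety is confirming that the abstract completion $\mathcal{X}$ is realized concretely as fiber-valued functions (as in the formal description $\mathcal{X}=\{f=M_{-1}(\cdot)g:g\in\Ell^p(\Omega,X)\}$) rather than merely as an abstract completion, so that the identification with $\Ell^p(\Omega,(X_{-1,s}))$ is genuinely the identity on representatives; Lemma \ref{lem:ExtMultSemiAssumpGen} and the fiberwise action of $\M_{-1}$ provide the bridge here.
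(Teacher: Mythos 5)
Your proposal is correct in outline, but it takes a genuinely different route from the paper. The paper proves the two inclusions directly and constructively: for $\subseteq$ it writes $f=\M_{-1}g$, approximates $g$ by $\B$-valued simple functions and pushes them forward by $\M_{-1}$ to exhibit fiber measurability of $f$; for $\supseteq$ it takes the simple functions $f_j$ furnished by fiber measurability and pulls them back via $g_j=\M_{-1}^{-1}f_j$, shows $(g_j(s))_j$ is pointwise Cauchy in $X$ (using that $M_{-1}(s)$ is an isometric isomorphism), and verifies $\M_{-1}g=f$ for the pointwise limit $g$. You instead embed $\left(\Ell^p(\Omega,X),\left\|\cdot\right\|_{-1}\right)$ isometrically into $\Ell^p(\Omega,(X_{-1,s})_{s\in\Omega})$, prove density, and invoke completeness of the fiber space together with uniqueness of completions. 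Both arguments pivot on the same two facts, namely the pointwise isometry $\left\|x\right\|_{-1,s}=\left\|M(s)^{-1}x\right\|$ and approximation by $\B$-valued simple functions, so the mathematical substance is shared; but your abstraction buys a cleaner structure (no explicit construction of the preimage $g$, with surjectivity of the correspondence coming for free from completeness), at the price of two points the paper's hands-on proof gets automatically. First, your density step really does need the truncation you anticipate: a $\B$-valued simple function in the fiber space need \emph{not} lie in $\Ell^p(\Omega,X)$, since $\left\|b\right\|_{-1,s}$ can decay in $s$ and allow supports $\Omega_i$ of infinite measure with finite fiber $p$-norm; one must cut down to finite-measure sets via $\sigma$-finiteness and dominate, e.g., after arranging $\left\|f_j(s)\right\|_{-1,s}\leq2\left\|f(s)\right\|_{-1,s}$ (your parenthetical about applying $M(\cdot)$ to the simple functions is beside the point here; Bochner $p$-integrability in the $X$-norm is what must be checked). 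Second, since the theorem asserts equality with the \emph{concrete} space $\{M_{-1}(\cdot)g:g\in\Ell^p(\Omega,X)\}$, you must confirm that the extension of the identity map sends the abstract element $\M_{-1}g$ to the function $s\mapsto M_{-1}(s)g(s)$; you correctly flag this and Lemma \ref{lem:ExtMultSemiAssumpGen} does supply it (approximate $g$ in $\Ell^p(\Omega,X)$, pass to an a.e.\ convergent subsequence, and use boundedness of $M_{-1}(s):X\to X_{-1,s}$). You also reproduce, rather than resolve, the paper's own gloss in deducing $0\in\rho(\M)$ from $0\in\rho(M(s))$ a.e.\ via Proposition \ref{prop:ResolventMultFiber}(a), which formally also requires $R(0,M(\cdot))\in\Ell^{\infty}(\Omega,\LLL_{\mathrm{s}}(X))$; since the paper makes the same step, this is not a defect of your proposal relative to the original. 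With the truncation detail carried out, your plan yields a complete and arguably more transparent proof.
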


\begin{proof}
Let $f\in\left[\Ell^p(\Omega,X)\right]_{-1}(\M)$ and find $g\in\Ell^p(\Omega,X)$ such that $f=\mathcal{M}_{-1}g$, where $\M_{-1}:\Ell^p(\Omega,X)\to\Ell^p(\Omega,X)_{-1}(\M)$. Since $g$ is measurable, we can find a sequence $(g_n)_{n\in\NN}$ of simple functions approximating $g$ pointwise, i.e.,
\[
g_n:=\sum_{i=1}^{m_n}{x_{n,i}\textbf{1}_{\Omega_{n,i}}}\ \text{and}\ g_n\to g\ \mu\text{-almost everywhere.}
\]
Without loss of generality, we assume that $x_{n,i} \in \B$ for every $i=1,2,\ldots m_n$ and $n\in\mathbb{N}$.
In order to show that $f\in\Ell^p(\Omega,(X_{-1,s})_{s\in\Omega})$ define $f_n:=\M_{-1}g_n$. Then
\[
f_n(s):=(\M_{-1}g_n)(s)=\sum_{i=1}^{m_n}{M_{-1}(s)x_{n,i}\textbf{1}_{\Omega_{n,i}}(s)},
\]
where we use Lemma \ref{lem:ExtMultSemiAssumpGen} as well as the fact that $M_{-1}(s)x_{n,i}\in X_{-1,s}$, $s\in\Omega$. Since the function $\Omega\rightarrow X; s\mapsto M(s)b$ is measurable for each $n\in\B$, it is easy to see that $f_n$ is fiber-measurable for each $n\in\mathbb{N}$.

 Furthemore,
\[
\left\|f\right\|^p_{\Ell^p\left(\Omega,(X_{-1,s})_{s\in\Omega}\right)}=\int_\Omega{\left\|f(s)\right\|_{-1,s}^p\ \dd{s}}=\int_{\Omega}{\left\|g(s)\right\|^p\ \dd{s}}=\left\|g\right\|^p_{\Ell^p(\Omega,X)}<\infty,
\]
showing that $f$ is a fiber $p$-integrable function, i.e., $f\in\Ell^p(\Omega,(X_{-1,s})_{s\in\Omega})$. 

\medskip
For the converse inclusion, suppose that $f\in\Ell^p(\Omega,(X_{-1,s})_{s\in\Omega})$. We have to show that there exists $g\in\Ell^p(\Omega,X)$ such that $f=\M_{-1}g$. Since $f$ is fiber measurable, there exists a sequence $(f_j)_{j\in\NN}$ of simple functions $f_j:\Omega\to\bigcup_{s\in\Omega}{X_{-1,s}}$ with $f(s)\in X_{-1,s}$ for $\mu$-almost every $s\in\Omega$ and
\[
f_j=\sum_{i=1}^{n_j}{b_{k_i}\textbf{1}_{\Omega_i}},
\]
where $b_{k_i}\in\mathcal{B}$, $\Omega_i\in\Sigma$, $\Omega_i\cap\Omega_j=\varnothing$, $i\neq j$, for $1\leq i\leq n_j$, and 
\begin{align}\label{eqn:CauchyFiber}
\left\|f(s)-f_j(s)\right\|_{-1,s}\to0,
\end{align}
for $j\to\infty$ and $\mu$-almost every $s\in\Omega$. By the assumption that $0\in\rho(M(s))$ for $\mu$-almost every $s\in\Omega$ we conclude by Proposition \ref{prop:ResolventMultFiber} that $0\in\rho(\M)$. So we define
\[
g_j:=(\M^{-1}_{-1} f_j)(\cdot)=\sum_{i=1}^{n_j}{\left(M^{-1}_{-1}(\cdot)b_{k_i}\right)\textbf{1}_{\Omega_i}(\cdot)}.
\]
We observe that $M_{-1}^{-1}(s)b_{k_i}\in X$ for $\mu$-almost every $s\in\Omega$ and hence $g_j$ is a simple function. By \eqref{eqn:CauchyFiber} we conclude that $(g_j(s))_{j\in\NN}$ is a Cauchy sequence with respect to $\left\|\cdot\right\|_s$ for $\mu$-almost every $s\in\Omega$ and hence convergent. This yields a measurable function $g:\Omega\to X$ by taking the pointwise limit, i.e., 
\[
g(s):=\lim_{j\to\infty}{g_j(s)},\quad s\in\Omega.
\]
By the continuity of $M_{-1}(s)$, $s\in\Omega$, on $X$ and the fact that $M_{-1}(s)M_{-1}^{-1}(s)=\Id$ for $\mu$-almost every $s\in\Omega$, we directly obtain that $\M_{-1}g=f$. Moreover,
\[
\left\|g\right\|_{\Ell^p(\Omega,X)}^p=\int_{\Omega}{\left\|g(s)\right\|^p\ \dd{s}}=\int_{\Omega}{\left\|f(s)\right\|_{-1,s}^p\ \dd{s}}=\left\|f\right\|^p_{\Ell^p(\Omega,(X_{-1,s})_{s\in\Omega}}<\infty,
\]
and therefore $g\in\Ell^p(\Omega,X)$.
\end{proof}

\CB{As a direct consequence we recover the following result, which has previously been used in in the continuous setting in \cite[Sect.~4]{NagelIdent}.

\begin{corollary}
Let $(A,\dom(A))$ be a generator of a $C_0$-semigroup on a Banach space $X$ and denote ist first extrapolation space by $X_{-1}^A$. Define the operator $(\M,\dom(\M))$ on $\Ell^p(\Omega,X)$ by
\[
(\M f)(s):=Af(s),\quad f\in\dom(\M):=\Ell^p(\Omega,\dom(A)),\ s\in\Omega.
\]
Then $\left(\Ell^p(\Omega,X)\right)_{-1}(\M)=\Ell^p(\Omega,X_{-1}^A)$.
\end{corollary}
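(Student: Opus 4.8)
The plan is to read the statement as the constant-fiber instance of Theorem~\ref{cor:FibLp}, in which every fiber operator is the single generator $A$. Put $M(s):=A$ for all $s\in\Omega$; then the operator $(\M,\dom(\M))$ of the corollary is precisely the operator-valued multiplication operator induced by this constant family. First I would check that the hypotheses of Theorem~\ref{cor:FibLp} are met (under the separability assumption on $X$ standing throughout this section). Since $(A,\dom(A))$ generates a $C_0$-semigroup, so does each $M(s)=A$. After the harmless shift used in the extrapolation construction we may assume $0\in\rho(A)$, whence $0\in\rho(M(s))$ for every $s$. Finally, for each $b\in\B$ the maps $s\mapsto M(s)b=Ab$ and $s\mapsto M(s)^{-1}b=A^{-1}b$ are constant in $s$ and therefore measurable. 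Thus Theorem~\ref{cor:FibLp} applies and yields
\[
\left[\Ell^p(\Omega,X)\right]_{-1}(\M)=\Ell^p\bigl(\Omega,(X_{-1,s})_{s\in\Omega}\bigr).
\]

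It then remains to identify the right-hand side with $\Ell^p(\Omega,X_{-1}^A)$. I would first observe that the fibers are constant: the seminorm producing $X_{-1,s}$ in Lemma~\ref{lem:BFSExtrap} is $\opnorm{x}_s=\left\|M(s)^{-1}x\right\|=\left\|A^{-1}x\right\|=\left\|x\right\|_{-1}$, which does not depend on $s$. Hence every $X_{-1,s}$ equals the single extrapolation space $X_{-1}^A$, with the same norm $\left\|\cdot\right\|_{-1}$.

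The main---and only genuinely nontrivial---step is to show that the $\Ell^p$-fiber space over this constant family is exactly the Bochner space $\Ell^p(\Omega,X_{-1}^A)$. Here I would reconcile the two notions of measurability: fiber measurability in the sense of Definition~\ref{def:fibrem}, specialized to the constant fiber $X_{-1}^A$, must coincide with ordinary strong (Bochner) measurability of functions $\Omega\to X_{-1}^A$. This is precisely where the countable $\QQ+i\QQ$-vector space $\B$ does its work: $\B$ is dense in $X$ and $X$ embeds densely into $X_{-1}^A$, so $\B$ is a countable dense subset of $X_{-1}^A$, and consequently the $\B$-valued simple functions of Definition~\ref{def:fibrem} (recall $N_s=\{0\}$, so $\B/N_s=\B$) are exactly the simple functions generating strong measurability in $X_{-1}^A$. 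Once the measurability notions are matched there is nothing left to do on the level of norms, since
\[
\left(\int_\Omega{\left\|f(s)\right\|_{-1,s}^p\ \dd\mu(s)}\right)^{1/p}=\left(\int_\Omega{\left\|f(s)\right\|_{-1}^p\ \dd\mu(s)}\right)^{1/p}
\]
is literally the Bochner $\Ell^p$-norm on $\Ell^p(\Omega,X_{-1}^A)$. I expect the identification of the two measurability concepts to be the sole obstacle; everything else is a direct specialization of Theorem~\ref{cor:FibLp}.
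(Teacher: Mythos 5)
Your proposal is correct and matches the paper's intent exactly: the paper states this corollary without a separate proof, calling it a ``direct consequence'' of Theorem~\ref{cor:FibLp}, and your argument simply spells out that specialization --- constant fibers $M(s)=A$, trivially measurable maps $s\mapsto Ab$ and $s\mapsto A^{-1}b$, and the identification of the constant-fiber $\Ell^p$-fiber space with the Bochner space $\Ell^p(\Omega,X_{-1}^A)$ via the countable dense set $\B\subseteq X\subseteq X_{-1}^A$. Your explicit reconciliation of fiber measurability with strong measurability is the one step the paper leaves tacit, and you handle it correctly.
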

}

\section*{Acknowledgement}
We would like to thank B\'{a}lint Farkas for a discussion in which the idea for the collaboration which resulted in this paper was initiated. Furthermore, we are indepted to Rainer Nagel for helpful feedback.

\end{document}